\newtheorem{theorem}{\bf Theorem}[section]
\newtheorem{corollary}[theorem]{\bf Corollary}
\newtheorem{lemma}[theorem]{\bf Lemma}
\newtheorem{proposition}[theorem]{\bf Proposition}
\newcommand{\qed}{\hfill $\square$ \bigskip}
\begin{document}

\baselineskip=0.30in
\vspace*{60mm}

\begin{center}
{\LARGE \bf Computing the Mostar index in networks with applications to molecular graphs}
\bigskip \bigskip

{\large \bf Niko Tratnik
}
\bigskip\bigskip

\baselineskip=0.20in

\textit{Faculty of Natural Sciences and Mathematics, University of Maribor, Slovenia} \\
{\tt niko.tratnik@um.si, niko.tratnik@gmail.com}

\bigskip\medskip

(Received \today)

\end{center}

\noindent
\begin{center} {\bf Abstract} \end{center}
Recently, a bond-additive topological descriptor, named as the Mostar index, has been introduced as a measure of peripherality in networks. For a connected graph $G$, the Mostar index is defined as $Mo(G) = \sum_{e=uv \in E(G)} |n_u(e) - n_v(e)|$, where for an edge $e=uv$ we denote by $n_u(e)$ the number of vertices of $G$ that are closer to $u$ than to $v$ and by $n_v(e)$ the number of vertices of $G$ that are closer to $v$ than to $u$. In this paper, we generalize the definition of the Mostar index to weighted graphs and prove that the Mostar index of a weighted graph can be computed in terms of Mostar indices of weighted quotient graphs. As a consequence, we show that the Mostar index of a benzenoid system can be computed in sub-linear time with respect to the number of vertices. Finally, our method is applied to some benzenoid systems and to a fullerene patch.
\vspace{3mm}\noindent


\baselineskip=0.30in



\section{Introduction}

In the last decades, many numerical quantities of graphs have been introduced and extensively studied in order to describe various structural properties. Such graph invariants are most commonly referred to as topological descriptors or topological indices and are often defined by using degrees of vertices, distances between vertices, eigenvalues, symmetries, and many other properties of graphs. In mathematical chemistry and in chemoinformatics, they are used for the development of quantitative structure-activity relationships (QSAR) and quantitative structure-property relationships (QSPR) in which some properties of compounds are correlated with their chemical structure \cite{todeschini}. Therefore, such descriptors are also called \textit{molecular descriptors} and can be applied in the process of finding new compounds with desired properties \textit{in silico} instead of \textit{in vitro}, which can save time and money. However, in recent years topological descriptors have found enormous applications in a rapidly growing research of complex networks \cite{dehmer,estrada}, which include communications networks, social networks, biological networks, and many others. In such networks, these descriptors are used as centrality measures, bipartivity measures, irregularity measures, connectivity measures, etc. For an example, an important task of centrality measures is to find the actors with a crucial role within the network.

One of the oldest molecular descriptors is the well known \textit{Wiener index}, which is, for a a connected graph $G$, defined as 
$$W(G) = \sum_{\lbrace u,v \rbrace \subseteq V(G)}d_G(u,v),$$
where $d_G(u,v)$ represents the distance between vertices $u$ and $v$ in $G$. It was introduced by H.\ Wiener in 1947 \cite{wiener} in order to calculate the boiling points of paraffins. Until known, it has found various applications in chemistry and in network theory. Moreover, many mathematical result are known for this index \cite{knor1}. However, it turns out that for every tree $T$ the Wiener index can be computed as the sum of edge contributions. More precisely,
$$W(T) = \sum_{e =uv \in E(T)}n_u(e)n_v(e),$$
where for an edge $e=uv$ we denote by $n_u(e)$ the number of vertices of $T$ that are closer to $u$ than to $v$ and by $n_v(e)$ the number of vertices of $T$ that are closer to $v$ than to $u$. Therefore, I.\ Gutman \cite{gut_sz} introduced the \textit{Szeged index} for any connected graph $G$ as

$$Sz(G) = \sum_{e=uv \in E(G)}n_u(e)n_v(e).$$
The Szeged index was also a great success and different variations of this index appeared in the literature, for example the edge-Szeged index, the revised Szeged index, the weighted Szeged index, etc. For some recent research on the Szeged index see \cite{arock1,ji,klavzar_li}.

Very recently, another bond-additive topological index, named as the Mostar index, has been introduced \cite{mostar}. For any connected graph $G$, the \textit{Mostar index} of $G$, denoted as $Mo(G)$, is defined
as

$$Mo(G) = \sum_{e=uv \in E(G)} |n_u(e) - n_v(e)|.$$
This index measures peripherality of individual edges and then sums the contributions of all edges into a global measure of peripherality for a given graph. In the same paper, a simple cut method for computing the Mostar index of benzenoid systems was also presented. Note that the cut method is a very useful tool for the computation of such topological descriptors \cite{klavzar-2015}.

In the present paper, we first expand the definition of the Mostar index to weighted graphs, which are also sometimes called networks. Later on, we prove that the Mostar index of a weighted graph can be computed as the sum of Mostar indices of weighted quotient graphs obtained by a partition of the edge set that is coarser than the $\Theta^*$-partition. Such methods were recently developed also for other distance-based molecular descriptors, see \cite{brez-trat,cre-trat,klavzar-2016,li}. Moreover, we show that the Mostar index of a benzenoid system can be computed in sub-linear time with respect to the number of vertices. Finally, our method is used to compute the Mostar index for some benzenoid systems and for a fullerene patch.

\section{Preliminaries}

Unless stated otherwise, the graphs considered in this paper are simple, finite, and connected.  For a graph $G$, we denote by $V(G)$ the set of vertices of $G$ and by $E(G)$ the set of its edges. Moreover, $d_G(u,v)$ is the usual shortest-path distance between vertices $u, v\in V(G)$. 
\smallskip

\noindent
Let $G$ be a graph and $e=uv$ an edge of $G$. Throughout the paper we will use the following notation:
$$N_u(e|G) = \lbrace x \in V(G) \ | \ d_G(x,u) < d_G(x,v) \rbrace, $$
$$N_v(e|G) = \lbrace x \in V(G) \ | \ d_G(x,v) < d_G(x,u) \rbrace, $$

\noindent
Let $\mathbb{R}_0^+ = [0,\infty)$. If $w: V(G) \rightarrow \mathbb{R}_0^+$ and $w': E(G) \rightarrow \mathbb{R}_0^+$ are give weights, then $(G,w,w')$ is called a \textit{vertex-edge-weighted graph} or shortly just a \textit{weighted graph}. For any $e=uv \in E(G)$ we define:
$$n_u(e|(G,w)) = \sum_{x \in N_u(e|G)} w(x), \quad n_v(e|(G,w)) = \sum_{x \in N_v(e|G)} w(x).$$

\noindent
We now introduce the Mostar index of $(G,w,w')$ as
$$Mo(G,w,w') = \sum_{e =uv \in E(G)} w'(e) \left| n_u(e|(G,w)) - n_v(e|(G,w)) \right|.$$
Obviously, for $w,w' \equiv 1$ this is exactly the Mostar index of $G$.
\smallskip

\noindent
Let $e = xy$ and $f = ab$ be two edges of a graph $G$. If $$d_G(x,a) + d_G(y,b) \neq d_G(x,b) + d_G(y,a),$$
we say that $e$ and $f$ are in relation $\Theta$ (also known as Djokovi\' c-Winkler relation) and write $e \Theta f$. Note that in some graphs this relation is not transitive (for example in odd cycles), although it is always reflexive and symmetric. As a consequence, we often consider the smallest transitive relation that contains relation $\Theta$ (i.e.\ the transitive closure of $\Theta$) and denote it by $\Theta^*$. It is known that in a \textit{partial cube}, which is defined as an isometric subgraph of some hypercube, relation $\Theta$ is always transitive, so $\Theta = \Theta^*$. Moreover, the class of partial cubes contains many interesting molecular graphs (for example benzenoid systems and phenylenes). For more information on partial cubes and relation $\Theta$ see \cite{klavzar-book}.
\smallskip

\noindent
Let $ \mathcal{E} = \lbrace E_1, \ldots, E_t \rbrace$ be the $\Theta^*$-partition of the edge set $E(G)$ and $ \mathcal{F} = \lbrace F_1, \ldots, F_k \rbrace$ an arbitrary partition of $E(G)$. If every element of $\mathcal{E}$ is a subset of some element of $\mathcal{F}$, we say that $\mathcal{F}$ is \textit{coarser} than $\mathcal{E}$. In such a case $\mathcal{F}$ will be shortly called a \textit{c-partition}. 
\smallskip

\noindent
Suppose $G$ is a graph and $F \subseteq E(G)$ is some subset of its edges. The \textit{quotient graph} $G / F$ is defined as the graph that has connected components of $G \setminus F$ as vertices; two such components $X$ and $Y$ being adjacent in $G / F$ if and only if some vertex from $X$ is
adjacent to a vertex from $Y$ in graph $G$. If $E=XY \in E(G/F)$ is an edge in graph $G/F$, then we denote by $\widehat{E}$ the set of edges of $G$ that have one end vertex in $X$ and the other end vertex in $Y$, i.e. $\widehat{E}=  \lbrace xy \in E(G)\,|\,x \in V(X), y \in V(Y) \rbrace $.

\section{Computing the Mostar index from the quotient graphs}
\label{sec:main}

We show in this section that the Mostar index of a weighted graph can be computed from the corresponding quotient graphs.

\noindent
Throughout the section, let $G$ be a connected graph and $\lbrace F_1, \ldots, F_k \rbrace$ a c-partition of the set $E(G)$. Moreover, the quotient graph $G/F_i$ will be shortly denoted as $G_i$ for any $i \in \lbrace 1, \ldots, k \rbrace$. In addition, we define the function $\ell_i: V(G) \rightarrow V(G_i)$ as follows: for any $u \in V(G)$, let $\ell_i(u)$ be the connected component $U$ of the graph $G \setminus F_i$ such that $u \in V(U)$. The next lemma was obtained in \cite{klavzar-2016}, but the proof can be also found in \cite{tratnik_grao}.

\begin{lemma} \cite{klavzar-2016,tratnik_grao} \label{distance}
If $u,v \in V(G)$ are two vertices, then 
$$d_G(u,v) = \sum_{i=1}^k d_{G_i}(\ell_i(u),\ell_i(v)).$$
\end{lemma}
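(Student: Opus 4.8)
The plan is to prove the identity as two opposite inequalities, both obtained by tracking what a shortest path of $G$ does under the projections $\ell_i$. The basic observation is that any $u$--$v$ walk $W$ in $G$ projects, via $\ell_i$, to a $\ell_i(u)$--$\ell_i(v)$ walk in $G_i$: an edge $xy$ of $W$ with $xy\notin F_i$ has both endpoints in the same component of $G\setminus F_i$, so $\ell_i(x)=\ell_i(y)$ and the step is stationary, while an edge $xy\in F_i$ moves (at most) one step in $G_i$. Hence the projected walk has length at most $|W\cap F_i|$ and at least $d_{G_i}(\ell_i(u),\ell_i(v))$. This single bookkeeping device drives the whole argument.

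First I would establish the inequality $d_G(u,v)\ge\sum_{i=1}^k d_{G_i}(\ell_i(u),\ell_i(v))$. Take a geodesic $P$ of $G$ from $u$ to $v$. Since $\{F_1,\dots,F_k\}$ partitions $E(G)$, each edge of $P$ lies in exactly one $F_i$, so $|P|=\sum_{i=1}^k|P\cap F_i|$. Combining this with the projection bound $|P\cap F_i|\ge d_{G_i}(\ell_i(u),\ell_i(v))$ gives the claimed inequality immediately. The content of the lemma is therefore the reverse inequality, which reduces to a single \emph{crux claim}: a geodesic of $G$ projects under each $\ell_i$ to a walk whose length equals $d_{G_i}(\ell_i(u),\ell_i(v))$, i.e. $|P\cap F_i|=d_{G_i}(\ell_i(u),\ell_i(v))$ for every $i$. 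Once this is known, $d_G(u,v)=|P|=\sum_i|P\cap F_i|=\sum_i d_{G_i}(\ell_i(u),\ell_i(v))$ and the proof is complete.

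For the crux claim I would start from the elementary fact that no two edges of a geodesic are in relation $\Theta$: if $e=x_px_{p+1}$ and $f=x_qx_{q+1}$ ($p<q$) lie on a geodesic, then using that subpaths of geodesics are geodesics one computes $d_G(x_p,x_q)+d_G(x_{p+1},x_{q+1})=2(q-p)=d_G(x_p,x_{q+1})+d_G(x_{p+1},x_q)$, so $e$ and $f$ are not $\Theta$-related. The decisive step is to promote this to the $\Theta^*$-level and to the cut $F_i$. Here I would use precisely the hypothesis that $\mathcal{F}$ is coarser than the $\Theta^*$-partition, so each $F_i$ is a union of $\Theta^*$-classes; transitivity of $\Theta^*$ forces the edges of $F_i$ to act as a genuine cut that a shortest path can never re-cross. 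Concretely, if the projected walk $\ell_i(P)=C_0,\dots,C_L$ were not a geodesic of $G_i$, it would have to return to a component it has already left, producing two $F_i$-edges of $P$ that join the same pair of components; I would then derive a contradiction by showing that such a configuration, together with the within-component subpaths supplied by $P$, closes a cycle that forces those crossing edges into the same $\Theta^*$-class as the intermediate edges, contradicting $F_i$ being a union of whole $\Theta^*$-classes. This passage from the local, directly-checkable statement about $\Theta$ to the global ``no return across a cut'' behaviour, carried out for a c-partition rather than for individual $\Theta^*$-classes, is exactly where I expect the real work—and the main obstacle—to lie; the remaining counting is routine. An alternative I would keep in reserve is to first prove the formula for the finest ($\Theta^*$) partition and then argue that merging $\Theta^*$-classes into the blocks $F_i$ leaves the sum of quotient distances unchanged.
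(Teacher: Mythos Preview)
The paper does not prove this lemma; it merely quotes it from \cite{klavzar-2016} and \cite{tratnik_grao}. In those sources the argument runs through the Graham--Winkler canonical metric representation: one first establishes the identity for the $\Theta^*$-partition itself (equivalently, $G$ embeds isometrically into the Cartesian product $\prod_j G/E_j$), and then checks that passing to a coarser partition leaves the sum of quotient distances unchanged. That is exactly your ``alternative kept in reserve'', so in that sense your proposal lines up with the cited literature.

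Your primary line of attack, however, has a real gap at precisely the spot you flag. Two points. First, the fact that no two edges of a geodesic are $\Theta$-related does \emph{not} lift to $\Theta^*$: in $C_5$ or in $K_{2,3}$ all edges lie in a single $\Theta^*$-class, so a geodesic can use several edges from one $\Theta^*$-class (and hence from one $F_i$). The passage from $\Theta$ to $\Theta^*$ therefore needs a genuinely new idea, not just bookkeeping. Second, the contradiction sketch is not sound as stated: a projected walk in $G_i$ that fails to be a geodesic need not revisit any vertex (think of $G_i$ containing a cycle), and even if it did, revisiting a vertex does not force two $F_i$-edges of $P$ to join the \emph{same} pair of components. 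The final cycle-closing step, meant to drag the crossing and intermediate edges into a common $\Theta^*$-class, is likewise unsupported, since edges on a cycle are not in general all $\Theta^*$-equivalent. I would drop this route and promote your alternative to the main argument; that is what the cited proofs do.
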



\noindent
The following lemma is the key to our method. The main ideas of the proof can be found in \cite{li}, but for the sake of completeness we give the whole proof.

\begin{lemma} \label{pomoc}
If $e =uv \in F_i$, where $i \in \lbrace 1, \ldots, k \rbrace$, then $U=\ell_i(u)$ and $V=\ell_i(v)$ are adjacent vertices in $G_i$, i.e.\ $E=UV \in E(G_i)$. Moreover,
$$N_u(e|G) = \bigcup_{X \in N_{U}(E|G_i)} V(X),$$
$$N_v(e|G) = \bigcup_{X \in N_{V}(E|G_i)} V(X).$$
\end{lemma}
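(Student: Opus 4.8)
The plan is to derive everything from Lemma~\ref{distance} together with one structural observation about the c-partition. First I would record that, since $\{F_1,\dots,F_k\}$ is a \emph{partition} of $E(G)$, the edge $e=uv$ belonging to $F_i$ cannot belong to any $F_j$ with $j\neq i$. Hence $e$ is still present in $G\setminus F_j$, so $u$ and $v$ lie in the same connected component of $G\setminus F_j$; that is, $\ell_j(u)=\ell_j(v)$ for every $j\neq i$. This single observation drives both parts of the statement.

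For the adjacency claim, I would apply Lemma~\ref{distance} to the pair $u,v$. All summands with index $j\neq i$ vanish because $\ell_j(u)=\ell_j(v)$, leaving $d_G(u,v)=d_{G_i}(\ell_i(u),\ell_i(v))=d_{G_i}(U,V)$. Since $uv\in E(G)$ we have $d_G(u,v)=1$, so $d_{G_i}(U,V)=1$. In particular $U\neq V$ and the two vertices are adjacent in $G_i$, i.e.\ $E=UV\in E(G_i)$ as required. (Alternatively, once $U\neq V$ is known, adjacency is immediate from the definition of the quotient graph, because the edge $e$ joins a vertex of $U$ to a vertex of $V$.)

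For the set equalities, I would fix an arbitrary $x\in V(G)$ and compare $d_G(x,u)$ with $d_G(x,v)$ via Lemma~\ref{distance}. Using $\ell_j(u)=\ell_j(v)$ for $j\neq i$, all terms except the $i$-th cancel, so
$$d_G(x,u)-d_G(x,v)=d_{G_i}(\ell_i(x),U)-d_{G_i}(\ell_i(x),V).$$
Consequently $d_G(x,u)<d_G(x,v)$ holds if and only if $d_{G_i}(\ell_i(x),U)<d_{G_i}(\ell_i(x),V)$, which is exactly the equivalence $x\in N_u(e|G)\iff \ell_i(x)\in N_U(E|G_i)$. Because the components of $G\setminus F_i$ partition $V(G)$, collecting these $x$ according to their component yields $N_u(e|G)=\bigcup_{X\in N_U(E|G_i)}V(X)$. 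The formula for $N_v(e|G)$ follows by interchanging the roles of $u$ and $v$.

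Finally, I would note that the argument has no real obstacle once Lemma~\ref{distance} is available; the only point requiring care is the observation that $\ell_j(u)=\ell_j(v)$ for $j\neq i$, which rests on disjointness of the blocks $F_j$. A minor but worthwhile check is that the biconditional correctly handles a vertex $x$ equidistant from $u$ and $v$: such an $x$ lies in neither $N_u(e|G)$ nor $N_v(e|G)$, and $\ell_i(x)$ is then equidistant from $U$ and $V$, so it lies in neither $N_U(E|G_i)$ nor $N_V(E|G_i)$, keeping both unions consistent.
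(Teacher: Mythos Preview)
Your proof is correct and follows essentially the same approach as the paper: both arguments use the observation $\ell_j(u)=\ell_j(v)$ for $j\neq i$ together with Lemma~\ref{distance} to reduce $d_G(u,v)$ and $d_G(x,u)-d_G(x,v)$ to the corresponding $G_i$-distances, yielding the adjacency of $U,V$ and the equivalence $x\in N_u(e|G)\iff \ell_i(x)\in N_U(E|G_i)$. Your write-up is in fact slightly more explicit about why $\ell_j(u)=\ell_j(v)$ holds and includes a helpful sanity check for equidistant vertices, but there is no substantive difference in method.
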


\begin{proof}
Obviously, for any $j \in \lbrace 1, \ldots, k \rbrace$, $j \neq i$, it holds $\ell_j(u) = \ell_j(v)$ and therefore $d_{G_j}(\ell_j(u),\ell_j(v))=0$. By Lemma \ref{distance} we now obtain
$$d_G(u,v)= \sum_{j=1}^k d_{G_j}(\ell_j(u),\ell_j(v)) = d_{G_i}(\ell_i(u),\ell_i(v)),$$
which implies $d_{G_i}(\ell_i(u),\ell_i(v))=1$. Hence, $U=\ell_i(u)$ and $V=\ell_i(v)$ are adjacent vertices in $G_i$ and we denote $E=UV$.
Next, let $z \in V(G)$ be an arbitrary vertex in $G$. Again, for any $j \in \lbrace 1, \ldots, k \rbrace$, $j \neq i$, it holds $d_{G_j}(\ell_j(z), \ell_j(u)) = d_{G_j}(\ell_j(z),\ell_j(v))$. Therefore, by Lemma \ref{distance} we have
\begin{eqnarray*}
d_G(z,u) - d_G(z,v) & = & \sum_{j=1}^k d_{G_j}(\ell_j(z),\ell_j(u)) - \sum_{j=1}^k d_{G_j}(\ell_j(z),\ell_j(v)) \\
& = & \sum_{j=1}^k \left( d_{G_j}(\ell_j(z),\ell_j(u)) - d_{G_j}(\ell_j(z),\ell_j(v)) \right) \\
& = & d_{G_i}(\ell_i(z),\ell_i(u)) - d_{G_i}(\ell_i(z),\ell_i(v)).
\end{eqnarray*}
\noindent
We can see from the obtained equality that $d_G(z,u)< d_G(z,v)$ if and only if $d_{G_i}(\ell_i(z),\ell_i(u)) $ $< d_{G_i}(\ell_i(z),\ell_i(v))$. Hence, it holds $z \in N_u(e|G)$ if and only if $\ell_i(z) \in N_{\ell_i(u)}(E|G_i) = N_{U}(E|G_i)$, which is equivalent to $z \in V(X)$ for some $X \in N_{U}(E|G_i)$. This proves the following equality:
$$N_u(e|G) = \bigcup_{X \in N_{U}(E|G_i)} V(X).$$
The remaining equality can be shown in the same way. \qed 
\end{proof}
\smallskip

\noindent
Let $w: V(G) \rightarrow \mathbb{R}_0^+$, $w': E(G) \rightarrow \mathbb{R}_0^+$ be given weights and $i \in \lbrace 1, \ldots,k \rbrace$. We define $\lambda_i : V(G_i) \rightarrow \mathbb{R}_0^+$ in the following way: for any $X \in V(G_i)$, let $\lambda_i(X) = \sum_{x \in V(X)}w(x)$. So $\lambda_i(X)$ is the sum of all the weights of vertices from $X$.

\noindent
Moreover, we define $\lambda_i' : E(G_i) \rightarrow \mathbb{R}_0^+$ as follows: for any $E=XY \in E(G_i)$, let $\lambda_i'(E) = \sum_{e \in \widehat{E}}w'(e)$. Therefore, $\lambda_i'(E)$ is the sum of weights of edges that have one end vertex in $X$ and the other end vertex in $Y$.

\noindent
The following lemma will be needed as well.

\begin{lemma} \label{pomoc2} If $e=uv \in F_i$, where $i \in \lbrace 1, \ldots, k \rbrace$, $U=\ell_i(u)$, $V=\ell_i(v)$, and $E=UV \in E(G_i)$, then
$$n_u(e|(G,w)) = n_U(E|(G_i,\lambda_i)),$$
$$n_v(e|(G,w)) = n_V(E|(G_i,\lambda_i)).$$
\end{lemma}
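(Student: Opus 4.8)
The plan is to derive both equalities directly from Lemma \ref{pomoc} together with the definitions of the weighted vertex sums. First I would invoke Lemma \ref{pomoc}: since $e=uv \in F_i$, the images $U=\ell_i(u)$ and $V=\ell_i(v)$ are adjacent in $G_i$ (so $E=UV$ is a genuine edge), and moreover $N_u(e|G) = \bigcup_{X \in N_U(E|G_i)} V(X)$. The one point to pin down is that this union is disjoint: the vertex sets of distinct connected components of $G \setminus F_i$ are pairwise disjoint (indeed they partition $V(G)$), so no vertex of $G$ is counted twice on the right-hand side.

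With disjointness in hand, I would simply unwind the definition of $n_u(e|(G,w))$. Starting from $n_u(e|(G,w)) = \sum_{x \in N_u(e|G)} w(x)$, I would substitute the disjoint union from Lemma \ref{pomoc} and regroup the sum component by component, obtaining $\sum_{X \in N_U(E|G_i)} \sum_{x \in V(X)} w(x)$. By the definition of $\lambda_i$, the inner sum $\sum_{x \in V(X)} w(x)$ is exactly $\lambda_i(X)$, so the expression collapses to $\sum_{X \in N_U(E|G_i)} \lambda_i(X)$, which is precisely $n_U(E|(G_i,\lambda_i))$. This establishes the first equality.

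The second equality $n_v(e|(G,w)) = n_V(E|(G_i,\lambda_i))$ is obtained by the identical argument after swapping the roles of $u$ and $v$ (and of $U$ and $V$), using the companion formula $N_v(e|G) = \bigcup_{X \in N_V(E|G_i)} V(X)$ supplied by Lemma \ref{pomoc}. I do not anticipate any genuine obstacle: essentially all the work is already contained in Lemma \ref{pomoc}, and the only step requiring (minimal) care is confirming that the relevant union is disjoint, which legitimizes breaking the weighted sum into a sum over the components $X$.
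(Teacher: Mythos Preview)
Your proposal is correct and follows essentially the same route as the paper: both arguments invoke Lemma \ref{pomoc}, split $\sum_{x\in N_u(e|G)} w(x)$ over the components $X\in N_U(E|G_i)$, recognize the inner sum as $\lambda_i(X)$, and conclude; the only difference is that you make the disjointness of the union explicit, which the paper leaves tacit.
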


\begin{proof}
By Lemma \ref{pomoc} we calculate
\begin{eqnarray*}
n_u(e|(G,w)) & = & \sum_{x \in N_u(e|G)} w(x) \\
& = & \sum_{X \in N_U(E|G_i)}  \left( \sum_{x \in V(X)} w(x) \right) \\
& = & \sum_{X \in N_U(E|G_i)} \lambda_i(X) \\
& = & n_U(E|(G_i,\lambda_i)),
\end{eqnarray*}
which proves the first equality. The other equality can be shown in the same way. \qed
\end{proof}
\smallskip

\noindent
Now we can state the main theorem.

\begin{theorem}
\label{mostar} If $(G,w,w')$ is a weighted connected graph and $\lbrace F_1,\ldots, F_k \rbrace$ is a c-partition of the set $E(G)$, then
$$Mo(G,w,w')=\sum_{i=1}^k Mo(G_i,\lambda_i,\lambda_i').$$
\end{theorem}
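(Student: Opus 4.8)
The plan is to start directly from the definition of $Mo(G,w,w')$ and to reorganize the edge sum so that it is indexed first by the part $F_i$ of the c-partition and then by the edges of the quotient graph $G_i$. Since $\lbrace F_1,\ldots,F_k \rbrace$ is a partition of $E(G)$, I would first split the defining sum as
$$Mo(G,w,w') = \sum_{i=1}^k \sum_{e=uv \in F_i} w'(e)\, \bigl| n_u(e|(G,w)) - n_v(e|(G,w)) \bigr|,$$
so that it suffices to show the inner sum over $F_i$ equals $Mo(G_i,\lambda_i,\lambda_i')$ for each fixed $i$.

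The crucial combinatorial step, which I expect to be the main obstacle, is to regroup the edges of $F_i$ according to the edge of $G_i$ they determine. Using Lemma~\ref{pomoc}, every edge $e=uv \in F_i$ joins two \emph{distinct} components $U=\ell_i(u)$ and $V=\ell_i(v)$ of $G \setminus F_i$, hence $E=UV$ is a genuine edge of $G_i$ and $e \in \widehat{E}$. Conversely, any edge of $G$ lying in some $\widehat{E}$ joins two different components and therefore cannot survive in $G \setminus F_i$, so it must belong to $F_i$. This shows that $F_i$ is the disjoint union $\bigcup_{E \in E(G_i)} \widehat{E}$, which lets me rewrite the inner sum as a double sum indexed first by $E \in E(G_i)$ and then by $e \in \widehat{E}$.

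With this decomposition in hand, the remainder is essentially factoring. For a fixed $E=UV \in E(G_i)$ and any $e \in \widehat{E}$, Lemma~\ref{pomoc2} gives $n_u(e|(G,w)) = n_U(E|(G_i,\lambda_i))$ and $n_v(e|(G,w)) = n_V(E|(G_i,\lambda_i))$; crucially the right-hand sides depend only on $E$ and not on the individual edge $e$. Hence the absolute-difference factor is constant over $\widehat{E}$ and can be pulled out of the inner sum, leaving $\sum_{e \in \widehat{E}} w'(e) = \lambda_i'(E)$ by definition of $\lambda_i'$. The inner sum over $F_i$ thus collapses to
$$\sum_{E=UV \in E(G_i)} \lambda_i'(E)\, \bigl| n_U(E|(G_i,\lambda_i)) - n_V(E|(G_i,\lambda_i)) \bigr| = Mo(G_i,\lambda_i,\lambda_i'),$$
and summing over $i$ finishes the proof. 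The only genuine care needed is the disjointness and exhaustiveness of the $\widehat{E}$ decomposition above; everything after that is a direct application of Lemma~\ref{pomoc2} together with the definitions of $\lambda_i$ and $\lambda_i'$.
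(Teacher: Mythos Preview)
Your proposal is correct and follows essentially the same route as the paper: split the defining sum over the partition $\{F_1,\ldots,F_k\}$, use Lemma~\ref{pomoc} to decompose each $F_i$ as the disjoint union $\bigcup_{E \in E(G_i)} \widehat{E}$, apply Lemma~\ref{pomoc2} to see that the absolute-difference factor is constant on each $\widehat{E}$, and then collapse $\sum_{e \in \widehat{E}} w'(e)$ to $\lambda_i'(E)$. Your explicit justification of both directions of the $F_i = \bigcup_{E} \widehat{E}$ decomposition is a small expository improvement over the paper, which simply asserts it via Lemma~\ref{pomoc}.
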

\begin{proof}
Obviously, it holds $E(G) = \displaystyle\bigcup_{i=1}^k F_i$. Moreover, for all $i \in \lbrace 1,\ldots, k \rbrace$ we have (by Lemma \ref{pomoc}) $$F_i= \bigcup_{E \in E(G_i)} \widehat{E}.$$ 
In the rest of the proof, we will write just $Mo$ instead of $Mo(G,w,w')$. Therefore, one can compute
\begin{eqnarray*}
Mo & = & \sum_{e =uv \in E(G)} w'(e) \left| n_u(e|(G,w)) - n_v(e|(G,w)) \right| \\
 & = & \sum_{i=1}^k \left( \sum_{e =uv \in F_i} w'(e) \left| n_u(e|(G,w)) - n_v(e|(G,w)) \right| \right) \\
  & = & \sum_{i=1}^k \left( \sum_{E=UV \in E(G_i)} \left[ \sum_{e =uv \in \widehat{E}} w'(e) \left| n_u(e|(G,w)) - n_v(e|(G,w)) \right| \right] \right).
\end{eqnarray*}
If $E=UV$ is an edge in $G_i$ and $e=uv$ is an arbitrary edge from $\widehat{E}$, then by Lemma \ref{pomoc2} we have
$$\left| n_u(e|(G,w)) - n_v(e|(G,w)) \right| = \left| n_U(E|(G_i,\lambda_i)) - n_V(E|(G_i,\lambda_i)) \right|.$$
Finally,
\begin{eqnarray*}
 Mo & = & \sum_{i=1}^k \left( \sum_{E=UV \in E(G_i)} \left[ \sum_{e \in \widehat{E}} w'(e) \left| n_U(E|(G_i,\lambda_i)) - n_V(E|(G_i,\lambda_i)) \right| \right] \right)  \\
& = & \sum_{i=1}^k  \left( \sum_{E=UV \in E(G_i)} \left| n_U(E|(G_i,\lambda_i)) - n_V(E|(G_i,\lambda_i)) \right| \left[ \sum_{e  \in \widehat{E}} w'(e)  \right] \right) \\
& = & \sum_{i=1}^k \left( \sum_{E=UV \in E(G_i)} \lambda_i'(E) \left| n_U(E|(G_i,\lambda_i)) - n_V(E|(G_i,\lambda_i)) \right|  \right) \\
& = & \sum_{i=1}^k Mo(G_i,\lambda_i,\lambda_i'),
\end{eqnarray*}
which is what we wanted to prove. \qed
\end{proof}

\noindent
If $w(u)=1$ for any $u \in V(G)$ and $w'(e)=1$ for any $e \in E(G)$, then $Mo(G,w,w')=Mo(G)$, which leads to the next corollary.

\begin{corollary}
\label{posl_izreka}
If $G$ is a connected graph and $\lbrace F_1,\ldots, F_k \rbrace$ a c-partition of the set $E(G)$, then
$$Mo(G)=\sum_{i=1}^k Mo(G_i,\lambda_i,\lambda_i'),$$
where $\lambda_i: V(G/F_i) \rightarrow \mathbb{R}_0^+$, $\lambda_i': E(G/F_i) \rightarrow \mathbb{R}_0^+$ are defined as follows: $\lambda_i(X)$ is the number of vertices in the connected component $X$ of $G \setminus F_i$ and $\lambda_i'(XY)$ is the number of edges in the set $\widehat{XY}$ (the number of edges between $X$ and $Y$).
\end{corollary}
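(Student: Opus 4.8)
The plan is to obtain the corollary as an immediate specialization of Theorem \ref{mostar}, exploiting the fact that the unweighted Mostar index is nothing but the weighted one with all weights equal to one. First I would fix the constant weight functions $w\colon V(G)\to\mathbb{R}_0^+$ and $w'\colon E(G)\to\mathbb{R}_0^+$ defined by $w(u)=1$ for every $u\in V(G)$ and $w'(e)=1$ for every $e\in E(G)$. With these weights the defining sum of $Mo(G,w,w')$ reduces term by term to the defining sum of $Mo(G)$, so $Mo(G,w,w')=Mo(G)$, exactly as noted in the remark preceding the statement.

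Next I would apply Theorem \ref{mostar} to the weighted graph $(G,w,w')$, obtaining
$$Mo(G)=Mo(G,w,w')=\sum_{i=1}^k Mo(G_i,\lambda_i,\lambda_i').$$
It then only remains to check that the induced weight functions $\lambda_i$ and $\lambda_i'$ occurring here, defined from $w$ and $w'$ just before Lemma \ref{pomoc2}, agree with the counting functions described in the corollary. For the vertices, $\lambda_i(X)=\sum_{x\in V(X)}w(x)=|V(X)|$ since $w\equiv 1$, which is precisely the number of vertices in the connected component $X$ of $G\setminus F_i$. For the edges, $\lambda_i'(XY)=\sum_{e\in\widehat{XY}}w'(e)=|\widehat{XY}|$ since $w'\equiv 1$, which is the number of edges of $G$ joining $X$ and $Y$.

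Every step of this argument is a direct substitution into definitions already established in the section, so I do not expect any genuine obstacle. The only point deserving explicit mention is that the weights induced by Theorem \ref{mostar} really do collapse to plain vertex- and edge-counts, and this is immediate from $w\equiv 1$ and $w'\equiv 1$.
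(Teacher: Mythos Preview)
Your proposal is correct and follows exactly the approach of the paper: the paper simply observes that taking $w\equiv 1$ and $w'\equiv 1$ gives $Mo(G,w,w')=Mo(G)$ and then invokes Theorem \ref{mostar}. Your additional verification that the induced weights $\lambda_i$ and $\lambda_i'$ reduce to vertex- and edge-counts is a welcome clarification that the paper leaves implicit.
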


\section{The Mostar index of benzenoid systems}

In this section, we show how the obtained method can be used to efficiently calculate the Mostar index of a benzenoid system. Note that such a computation can be done even by hand.

\noindent
Let ${\cal H}$ be the hexagonal (graphite) lattice and let $Z$ be a cycle on it. A {\em benzenoid system} is the graph induced by the vertices and edges of ${\cal H}$, lying on $Z$ or in its interior. The benzenoid systems defined in this way are sometimes called \textit{simple} \cite{DGKZ-2002}. In the figures, we usually do not use dots to denote the vertices of benzenoid systems. For an example of a benzenoid system see Figure \ref{koronen} or Figure \ref{benzenoid}. More information on these molecular graphs can be found in \cite{gucy-89}.

\noindent
An \textit{elementary cut} of a benzenoid system $G$ is a line segment that starts at the center of a peripheral edge of a benzenoid system,
goes orthogonal to it and ends at the first next peripheral
edge of $G$. The main insight for our consideration
is that every $\Theta$-class of a benzenoid system
$G$ coincides with exactly one of its elementary cuts. Therefore, we can easily see that benzenoid systems are partial cubes \cite{klavzar-book}. As a consequence, by removing all the edges that correspond to an elementary cut of a benzenoid system, the obtained graph has exactly two connected components.

\noindent
The edge set of a benzenoid system $G$ can be naturally partitioned into sets $F_1, F_2$, and $F_3$ of edges of the same direction. Obviously, the partition $\lbrace F_1,F_2,F_3 \rbrace$ is a c-partition of the set $E(G)$. For $i \in \lbrace 1, 2, 3 \rbrace$, let $T_i = G/F_i$ be the corresponding quotient graph. It is well known that $T_1$, $T_2$, and $T_3$ are trees \cite{chepoi-1996}.

\noindent
Next, we define the weights $\lambda_i : V(T_i) \rightarrow \mathbb{R}_0^+$ and $\lambda_i' : E(T_i) \rightarrow \mathbb{R}_0^+$ as in Corollary \ref{posl_izreka}:
\begin{enumerate}
\item for $X \in V(T_i)$, let $\lambda_i(X)$ be the number of vertices in the component $X$ of $G \setminus F_i$;
\item for $E = XY \in E(T_i)$, let $\lambda_i'(E)$ be the number of edges between components $X$ and $Y$ (the number of edges in the set $\widehat{E}$).
\end{enumerate}

\noindent
By Corollary \ref{posl_izreka} we immediately arrive to the following proposition.

\begin{proposition} \label{ben_formula}
If $G$ is a benzenoid system, then
$$Mo(G) = Mo(T_1,\lambda_1,\lambda_1') + Mo(T_2,\lambda_2,\lambda_2') + Mo(T_3,\lambda_3,\lambda_3').$$
\end{proposition}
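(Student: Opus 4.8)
The plan is to obtain this proposition as an immediate specialization of Corollary \ref{posl_izreka} to the partition $\{F_1, F_2, F_3\}$ with $k = 3$. The only thing that genuinely requires checking is that this partition into edges of the three directions really is a c-partition, i.e.\ that it is coarser than the $\Theta^*$-partition $\mathcal{E}$ of $E(G)$; once this is established, the formula follows by direct substitution into the corollary.

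First I would recall the structural description of the Djokovi\'{c}--Winkler relation on benzenoid systems. Since a benzenoid system is a partial cube, we have $\Theta = \Theta^*$, so the $\Theta^*$-classes coincide with the $\Theta$-classes, and each of these coincides with exactly one elementary cut of $G$. The key observation is that an elementary cut, being a line segment orthogonal to a peripheral edge, meets only edges of a single direction; hence every elementary cut is entirely contained in one of the sets $F_1$, $F_2$, $F_3$. Consequently each $\Theta^*$-class is a subset of some $F_i$, which is precisely the defining condition for $\{F_1, F_2, F_3\}$ to be coarser than $\mathcal{E}$, i.e.\ a c-partition.

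With the c-partition property in hand, I would invoke Corollary \ref{posl_izreka} with $k = 3$. That corollary yields $Mo(G) = \sum_{i=1}^{3} Mo(G_i, \lambda_i, \lambda_i')$, where $G_i = G/F_i$ and the vertex- and edge-weights $\lambda_i$, $\lambda_i'$ are exactly the counting weights defined just before the proposition (the number of vertices in a component $X$, respectively the number of edges in the cut $\widehat{E}$). Writing $T_i = G_i = G/F_i$ then reproduces the claimed identity verbatim. Note that the corollary imposes no structural hypothesis on the quotient graphs, so the fact that $T_1$, $T_2$, $T_3$ happen to be trees is not needed for the identity itself (it is exploited only later, for efficient evaluation).

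I do not expect any real obstacle here: the entire computational content has already been absorbed into Corollary \ref{posl_izreka}, and the only residual step is the verification that the direction classes refine to a c-partition. Even that rests on the standard and previously cited fact that the $\Theta$-classes of a benzenoid system are precisely its elementary cuts; the one mild point to state explicitly is that all edges of a given elementary cut share a single direction, so that no $\Theta$-class is split across two different sets $F_i$.
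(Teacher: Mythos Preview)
Your proposal is correct and matches the paper's approach exactly: the paper simply observes that $\{F_1,F_2,F_3\}$ is a c-partition (because each $\Theta$-class equals an elementary cut, and every elementary cut consists of edges of a single direction) and then invokes Corollary~\ref{posl_izreka} with $k=3$. If anything, you have supplied more justification for the c-partition step than the paper, which dismisses it as ``obvious.''
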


\noindent
The following lemma will be also needed. 
\begin{lemma} 
\label{lema_mo}
Suppose $(T,w,w')$ is a  weighted tree with $n$ vertices. Then the Mostar  index $Mo(T,w,w')$ can be computed in $O(n)$ time.
\end{lemma}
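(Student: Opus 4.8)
The plan is to exploit the fact that, in a tree, each edge is a bridge, so that the quantities $n_u(e|(T,w))$ and $n_v(e|(T,w))$ coincide with the weights of the two components obtained by deleting $e$. Concretely, for an edge $e=uv$ the graph $T-e$ splits into two components, say $T_u$ (containing $u$) and $T_v$ (containing $v$). For any vertex $x\in V(T_u)$ the unique $x$–$v$ path in $T$ runs through $u$ and then across $e$, so $d_T(x,v)=d_T(x,u)+1$ and hence $x\in N_u(e|T)$; the symmetric statement holds for $x\in V(T_v)$. Thus $N_u(e|T)=V(T_u)$ and $N_v(e|T)=V(T_v)$, and these two sets partition $V(T)$. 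Writing $W=\sum_{x\in V(T)}w(x)$ for the total vertex weight, we obtain $n_u(e|(T,w))+n_v(e|(T,w))=W$, so the edge contribution simplifies to
$$w'(e)\left|n_u(e|(T,w))-n_v(e|(T,w))\right| = w'(e)\,\bigl|W-2\,n_v(e|(T,w))\bigr|.$$
This reduces the whole task to computing, for each edge, the weight of just one of its two sides.

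Next I would organize this computation into a single tree traversal. Root $T$ at an arbitrary vertex $r$ and perform a post-order depth-first search. For each vertex $v$ I record $s(v)=\sum_{x\in V(T_v^{\downarrow})}w(x)$, where $T_v^{\downarrow}$ is the subtree rooted at $v$; these values satisfy the recurrence $s(v)=w(v)+\sum_{c}s(c)$, the sum being over the children $c$ of $v$, and so they are all obtained in one post-order sweep. Every non-root vertex $v$ corresponds bijectively to the edge $e$ joining it to its parent, and for that edge the component $T_v$ is precisely $T_v^{\downarrow}$; therefore $n_v(e|(T,w))=s(v)$ and the contribution of $e$ is exactly $w'(e)\,|W-2s(v)|$. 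Summing these contributions over the $n-1$ non-root vertices then yields $Mo(T,w,w')$.

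The complexity analysis closes the argument: computing $W$ is $O(n)$; the post-order DFS visits each of the $n$ vertices and each of the $n-1$ edges once while maintaining the $s(v)$ values, hence runs in $O(n)$; and evaluating and accumulating the $n-1$ edge contributions costs $O(1)$ apiece. The total is therefore $O(n)$. The one pitfall to guard against, and the reason the rooted sweep is essential, is that a naive implementation which recomputes a component weight from scratch for every edge would spend $\Theta(n)$ per edge and thus $\Theta(n^2)$ overall; the subtree recurrence is exactly what amortizes all these component weights into a single linear pass. Beyond that bookkeeping, the only conceptual point is the bridge observation above, which guarantees that no vertex is equidistant from $u$ and $v$ and hence that $N_u(e|T)$ and $N_v(e|T)$ genuinely exhaust $V(T)$.
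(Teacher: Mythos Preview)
Your proof is correct and follows essentially the same approach as the paper: the paper's own argument is just a one-line pointer (``based on the standard BFS algorithm and almost the same as the proof of Proposition~4.4 in \cite{cre-trat}''), and the method alluded to there is precisely the linear-time traversal you spell out---use the bridge structure of a tree to identify $N_u(e|T)$ and $N_v(e|T)$ with the two components of $T-e$, and compute all the subtree weights in a single sweep. The only cosmetic difference is that the paper says BFS while you implement a post-order DFS; either traversal computes the subtree sums in $O(n)$, so this does not amount to a different route.
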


\begin{proof}
The proof is based on the standard BFS (breadth-first search) algorithm and is almost the same as the proof of Proposition 4.4 in \cite{cre-trat}. \qed
\end{proof}
\smallskip

\noindent
In \cite{chepoi-1996} it was shown that each quotient tree $(T_i,\lambda_i,\lambda_i')$, $i \in \lbrace 1,2,3 \rbrace$, can be computed in linear time with respect to the number of vertices in a benzenoid system. Therefore, by Lemma \ref{lema_mo} and Proposition \ref{ben_formula}, the Mostar index of a benzenoid system $G$ can be computed in linear time $O(|V(G)|)$. However, the Mostar index can be computed even faster, i.e.\ in sub-linear time. To show this, we recall the next lemma which was stated as Lemma 3.1 in \cite{cre-trat1}. The proof uses a special construction of weighted trees $(T_i,\lambda_i,\lambda_i')$, $i \in \lbrace 1,2,3 \rbrace$, that depends only on the boundary cycle.

\begin{lemma}\label{tree_O(z)} \cite{cre-trat1}
	If $G$ is a benzenoid system and $Z$ its boundary cycle, then each weighted tree $(T_i, \lambda_i, \lambda_i')$, $i \in \{1,2,3\}$, can be obtained in $O(|Z|)$ time.	
\end{lemma}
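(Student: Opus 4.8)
The plan is to reconstruct the whole weighted structure of the three quotient trees from a single pass along the boundary cycle $Z$, exploiting the bijection between $\Theta$-classes of a benzenoid system and its elementary cuts. The starting point is a counting identity. By definition an elementary cut begins at the midpoint of one peripheral edge, runs orthogonally across a sequence of mutually parallel edges, and stops at the \emph{first} next peripheral edge; hence each cut contains exactly two peripheral edges, namely its endpoints, and every peripheral edge lies on exactly one cut. Therefore the number of $\Theta$-classes equals $|Z|/2$. Since the edges of $T_i = G/F_i$ are in bijection with the cuts of direction $i$ and each $T_i$ is a tree, the three trees together have $O(|Z|)$ vertices and edges. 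Thus the combinatorial skeleton already has size $O(|Z|)$, and the real content of the lemma is that the weights $\lambda_i$ and $\lambda_i'$, which a priori aggregate information over all of $V(G)$ and $E(G)$, can nonetheless be assembled in time proportional to $|Z|$ rather than to $|V(G)|$.

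First I would fix integer coordinates on the hexagonal lattice and walk once around $Z$, recording the lattice coordinates of every boundary vertex and labelling every boundary edge by its direction class in $\{1,2,3\}$; this costs $O(|Z|)$. Fix a direction $i$. After deleting $F_i$, every vertex retains at most one edge in each of the two remaining directions, so each component of $G\setminus F_i$ is a path, a \emph{fibre}, both of whose endpoints lie on $Z$ (a fibre cannot terminate in the interior). The cuts of direction $i$ are exactly the bundles of $F_i$-edges joining two consecutive fibres, and since each such bundle is pinned down by its two peripheral edges, the boundary scan already lists every cut and the pair of fibres it separates. Linearly ordering the fibres and connecting consecutive ones through their separating cut reconstructs $T_i$ in $O(|Z|)$ time.

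It remains to attach the weights, and here lies the main obstacle. The edge weight $\lambda_i'(E)$ is the number of $F_i$-edges in the corresponding cut, which is an affine function of the lattice coordinates of the cut's two peripheral endpoints and is therefore evaluated in $O(1)$. The delicate quantity is the vertex weight $\lambda_i(X)$, the size of the fibre $X$: summing these by actually visiting the vertices would cost $\Theta(|V(G)|)$, which is exactly what must be avoided, and indeed $|V(G)|$ can be quadratic in $|Z|$. The resolution rests on the path structure established above. Because $G$ is a \emph{simple} (hole-free) benzenoid system, each fibre is a single unbranched lattice path whose length is forced by the geometry, so that the number of its vertices equals an affine function of the coordinates of its two endpoints on $Z$. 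Hence every $\lambda_i(X)$ is read off in $O(1)$ from the boundary scan, the weighted tree $(T_i,\lambda_i,\lambda_i')$ is produced in $O(|Z|)$, and repeating this for $i\in\{1,2,3\}$ yields the claim. Establishing precisely this affine length formula, i.e.\ confirming that a fibre never ``skips'' lattice positions and is indeed determined by its two boundary ends, is the one genuinely non-routine step; everything else reduces to the cut--$\Theta$-class correspondence and linear-time array bookkeeping.

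As a consistency check I would finally verify $\sum_{X\in V(T_i)}\lambda_i(X)=|V(G)|$ and $\sum_{E\in E(T_i)}\lambda_i'(E)=|F_i|$ for each $i$, across all three directions; agreement of these totals validates the coordinate bookkeeping and guarantees that no interior vertex or edge has been double-counted or omitted.
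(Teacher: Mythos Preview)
The paper does not actually prove this lemma: it is quoted from \cite{cre-trat1}, and the only information given here is the one-line remark that ``the proof uses a special construction of weighted trees $(T_i,\lambda_i,\lambda_i')$ that depends only on the boundary cycle.'' Your proposal is exactly in this spirit---read off the fibres, the cuts, and both weight functions from a single traversal of $Z$---and the counting identity $|Z| = 2\sum_i |E(T_i)|$ you begin with is precisely the observation the paper invokes in the proof of Theorem~4.3. So at the level of strategy you are aligned with what the paper points to.

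There is, however, one genuine slip. You write that after identifying the fibres one ``linearly orders'' them and connects consecutive ones via their separating cuts to reconstruct $T_i$. This presumes $T_i$ is a path, which is false for non-convex benzenoid systems: two distinct fibres can occupy the same horizontal band (think of a benzenoid with a concave notch, or indeed the branched example in Figure~\ref{benzenoid} of the paper), and then $T_i$ branches. The fix is not hard---one can recover the tree adjacencies by a stack-based sweep of the boundary in $O(|Z|)$ time, since each cut is still pinned down by its two peripheral edges---but the argument as written does not cover it, and you flagged the wrong step (the affine length formula for a fibre) as the only non-routine one. Once the tree-building step is corrected to handle branching, the rest of your outline (fibre length from endpoint coordinates, cut size from peripheral-edge coordinates, final consistency checks) goes through.
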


\noindent The main result of this section can now be stated.

\begin{theorem}
If $G$ is a benzenoid system with the boundary cycle $Z$, then the Mostar index $Mo(G)$ can be computed in $O(|Z|)$ time.
\end{theorem}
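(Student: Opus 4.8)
The plan is to assemble the three ingredients already in hand --- Proposition \ref{ben_formula}, Lemma \ref{tree_O(z)}, and Lemma \ref{lema_mo} --- into a single running-time bound. The overarching idea is that Proposition \ref{ben_formula} reduces the computation of $Mo(G)$ to computing the Mostar index of the three weighted quotient trees $(T_i,\lambda_i,\lambda_i')$, $i \in \{1,2,3\}$, and that each of these subproblems can be handled within the desired budget $O(|Z|)$.

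First I would invoke Lemma \ref{tree_O(z)} to construct each of the three weighted trees directly from the boundary cycle $Z$ in $O(|Z|)$ time; carrying this out for all three directions still costs only $O(|Z|)$. Next I would apply Lemma \ref{lema_mo} to each constructed tree to obtain $Mo(T_i,\lambda_i,\lambda_i')$ in time linear in the number of its vertices. Finally, summing the three values as dictated by Proposition \ref{ben_formula} contributes only $O(1)$ additional work, so the total running time is a bounded sum of $O(|Z|)$ quantities, i.e.\ $O(|Z|)$.

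The step that requires the most care, and which I regard as the crux, is justifying that the per-tree cost $O(|V(T_i)|)$ coming from Lemma \ref{lema_mo} is itself $O(|Z|)$ rather than merely $O(|V(G)|)$. The cleanest way to see this is to observe that the entire tree $(T_i,\lambda_i,\lambda_i')$, together with its vertex and edge weights, is produced by the algorithm of Lemma \ref{tree_O(z)} in $O(|Z|)$ time; since an algorithm cannot emit more output than the time it consumes, we necessarily have $|V(T_i)| = O(|Z|)$. Hence each invocation of Lemma \ref{lema_mo} runs in $O(|Z|)$ time. I would state this size bound explicitly as the pivotal observation, because without it one would only recover the weaker linear estimate $O(|V(G)|)$ noted in the paragraph preceding the theorem.

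Putting the pieces together, the construction of the three trees, the three Mostar-index evaluations on them, and the final summation each cost $O(|Z|)$, which yields the claimed bound.
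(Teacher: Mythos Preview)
Your proof is correct and follows essentially the same approach as the paper: both assemble Lemma~\ref{tree_O(z)}, Lemma~\ref{lema_mo}, and Proposition~\ref{ben_formula} in the same order, with the crux being the bound $|V(T_i)| = O(|Z|)$. The only difference is how that bound is justified: the paper uses the explicit identity $|Z| = 2|E(T_1)| + 2|E(T_2)| + 2|E(T_3)|$ (each elementary cut corresponds to one edge of a quotient tree and crosses exactly two boundary edges), whereas you invoke the general principle that an $O(|Z|)$-time construction cannot produce an output of size exceeding $O(|Z|)$.
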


\begin{proof}
By Lemma \ref{tree_O(z)} it follows that the trees $(T_i,\lambda_i,\lambda_i')$, $i \in \lbrace 1,2,3 \rbrace$, can be obtained in $O(|Z|)$ time. Moreover, it is easy to see that $|Z|= 2|E(T_1)| + 2|E(T_2)| + 2|E(T_3)|$ since every elementary cut corresponds to exactly one edge in a quotient tree and goes through exactly two edges from $Z$. Therefore, by Lemma \ref{lema_mo} the Mostar index of each weighted tree can be computed in linear time with respect to $|Z|$. Finally, by using Proposition \ref{ben_formula} we obtain that the Mostar index $Mo(G)$ can be computed in $O(|Z|)$ time. \qed
\end{proof}

In the rest of the section, we apply Proposition \ref{ben_formula} to  some benzenoid systems. As the first example, we calculate the Mostar index for an infinite family of molecular graphs called \textit{coronenes}, which was already done in Theorem 14 of \cite{mostar}. However, we now show how the same result can be achieved by using our method. In particular, coronene $G_1$ is just a single hexagon, and $G_h$ is obtained from $G_{h-1}$ by adding a ring of hexagons around it. Coronene $G_3$ is depicted in Figure \ref{koronen}.

\begin{figure}[h!] 
\begin{center}
\includegraphics[scale=0.6,trim=0cm 0cm 0cm 0cm]{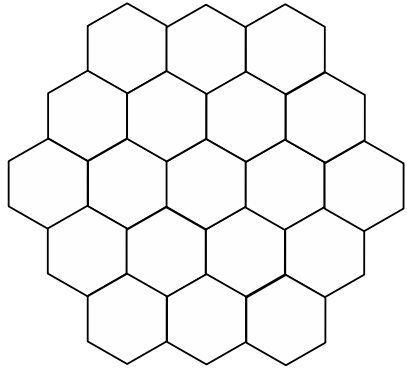}
\end{center}
\caption{\label{koronen} Coronene $G_3$.}
\end{figure}

Firstly, we have to determine the weighted quotient trees $(T_i,\lambda_i,\lambda_i')$ for any $i \in \lbrace 1,2,3 \rbrace$. Because of the symmetry, all the weighted trees are equal. Let $F_1$ be the set of all the vertical edges in $G_h$, $h \geq 1$. For the graph $G_3$ these edges and the corresponding elementary cuts are shown in Figure \ref{prerezi} (a). Moreover, the weighted quotient tree $(T_1,\lambda_1,\lambda_1')$ can be seen in Figure \ref{prerezi} (b).

\begin{figure}[h!] 
\begin{center}
\includegraphics[scale=0.6,trim=0cm 0cm 0cm 0cm]{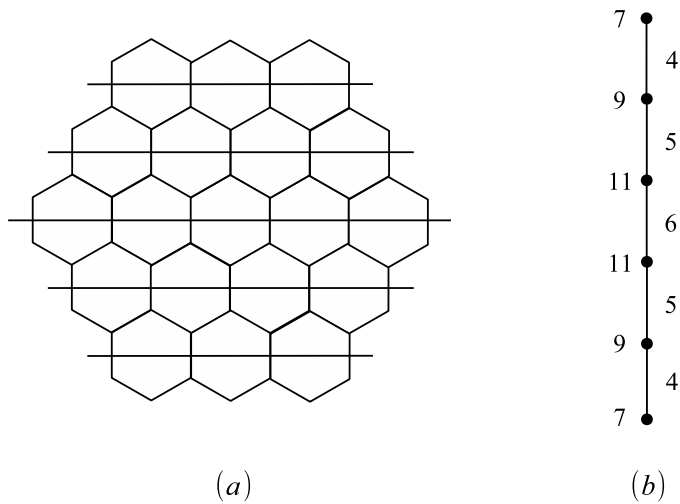}
\end{center}
\caption{\label{prerezi} (a) Horizontal elementary cuts for coronene $G_3$ and (b) the weighted tree $(T_1,\lambda_1,\lambda_1')$.}
\end{figure}

\noindent
However, we can easily generalize the above example to coronene $G_h$ and obtain that the quotient tree $T_1$ is isomorphic to the path on $2h$ vertices. Moreover, the weighted tree $(T_1,\lambda_1,\lambda_1')$ is depicted in Figure \ref{kvocient}.

\begin{figure}[h!] 
\begin{center}
\includegraphics[scale=0.6,trim=0cm 0cm 0cm 0cm]{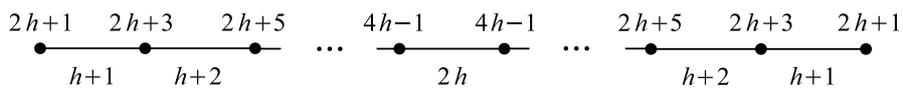}
\end{center}
\caption{\label{kvocient} Weighted quotient tree $(T_1,\lambda_1,\lambda_1')$ for graph $G_h$.}
\end{figure}

\noindent
Therefore, it is easy to compute
\begin{eqnarray*}
Mo(T_1,\lambda_1,\lambda_1') & = & 2 \sum_{i = h+1}^{2h-1} \left( 2i \sum_{j=i}^{2h-1}(2j+1)  \right) \\
& = & 9h^4 - 6h^3 - 3h^2.
\end{eqnarray*}
\noindent
Finally, by Proposition \ref{ben_formula} we have
\begin{eqnarray*}
Mo(G_h) & = & Mo(T_1,\lambda_1,\lambda_1') + Mo(T_2,\lambda_2,\lambda_2') + Mo(T_3,\lambda_3,\lambda_3') \\
& = & 3 Mo(T_1,\lambda_1,\lambda_1') \\
& = & 27h^4 - 18h^3 - 9h^2,
\end{eqnarray*}
which coincides with the result from \cite{mostar}.
\smallskip

As the second example, we compute the Mostar index for a branched benzenoid system $G$ from Figure \ref{benzenoid}.

\begin{figure}[h!] 
\begin{center}
\includegraphics[scale=0.6,trim=0cm 0cm 0cm 0cm]{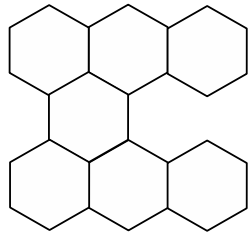}
\end{center}
\caption{\label{benzenoid} Benzenoid system $G$.}
\end{figure}

Again, let $F_1$ be the set of all vertical edges of $G$ and let $F_2,F_3$ be the edges in the other two directions. Then, the weighted quotient trees are shown in Figure \ref{trees}.

\begin{figure}[h!] 
\begin{center}
\includegraphics[scale=0.55,trim=0cm 0cm 0cm 0cm]{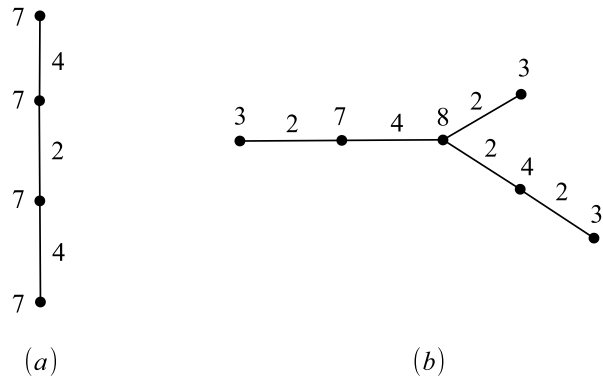}
\end{center}
\caption{\label{trees} Weighted quotient trees (a) $(T_1,\lambda_1,\lambda_1')$ and (b) $(T_2,\lambda_2,\lambda_2') = (T_3,\lambda_3,\lambda_3')$ for graph $G$.}
\end{figure}

\noindent
Hence, we can compute
\begin{eqnarray*}
Mo(T_1,\lambda_1,\lambda_1') & = & 4 \cdot 14 + 4 \cdot 14 = 112, \\
Mo(T_2,\lambda_2,\lambda_2') =Mo(T_2,\lambda_2,\lambda_2')
& = & 2 \cdot 22 + 4 \cdot 8 + 2 \cdot 22 + 2 \cdot 14 + 2 \cdot 22 = 192.
\end{eqnarray*}

\noindent
Finally, by Proposition \ref{ben_formula} one can calculate
\begin{eqnarray*}
Mo(G) & = & Mo(T_1,\lambda_1,\lambda_1') + Mo(T_2,\lambda_2,\lambda_2') + Mo(T_3,\lambda_3,\lambda_3') \\
& = & 112 + 192 + 192 \\
& = & 496.
\end{eqnarray*}

We should mention that by using our method,  similar results can be deduced for various families of interesting molecular graphs and networks, in particular for phenylenes \cite{brez-trat} and $C_4C_8$ systems \cite{cre-trat}. In these cases, the Mostar index can be computed in terms of four (instead of three) weighted quotient trees.

\section{A fullerene patch} A \textit{fullerene} $F$ is a 3-regular plane graph with only pentagonal and hexagonal faces. If $C$ is an elementary cycle in $F$, then $C$ partitions
the plane into two open regions. A \textit{patch} of $F$ is defined as the graph obtained from $F$ by deleting all vertices (and edges) in the interior of one of the two regions \cite{graver}. 

\noindent
In this section, we apply our method to compute the Mostar index of a patch that is obtained from the well known buckminsterfullerene $C_{60}$. Therefore, let $G$ be the graph shown in Figure \ref{nanocone} (a). However, graph $G$ also belongs to another family of important chemical structures called \textit{nanocones}. Generally speaking, nanocones are planar graphs where the inner faces are mostly hexagons, but there can be also some non-hexagonal inner faces, most commonly pentagons.

Firstly, we have to determine the $\Theta^*$-classes of $G$, which are denoted by $E_1,E_2,E_3,E_4$, $E_5,E_6$ and shown in Figure \ref{nanocone} (b). Note that the $\Theta^*$-classes of $G$ were already obtained in \cite{li}, where the revised edge-Szeged index was computed for this graph. However, here we will use another partition of the set $E(G)$ and therefore obtain different quotient graphs. It turns out that for graph $G$ relation $\Theta$ is not transitive and hence, $G$ is not a partial cube. 

\begin{figure}[h!] 
\begin{center}
\includegraphics[scale=0.6,trim=0cm 0cm 0cm 0cm]{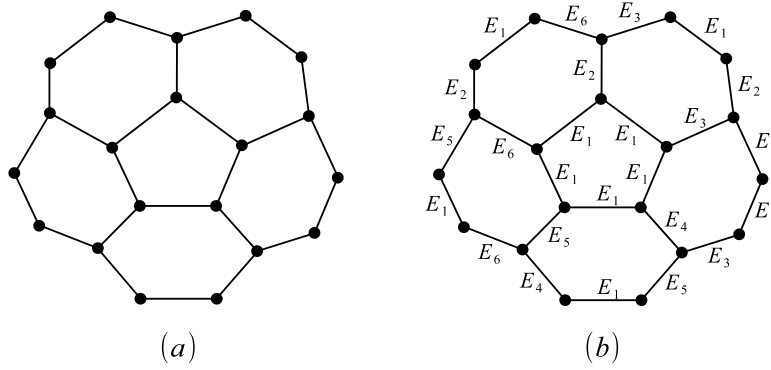}
\end{center}
\caption{\label{nanocone} (a) Graph $G$ and (b) the $\Theta^*$-classes of $G$.}
\end{figure}

\noindent
Let $F_1=E_1$ and $F_2 = E_2 \cup E_3 \cup E_4 \cup E_5 \cup E_6$. Obviously, $\lbrace F_1,F_2 \rbrace$ is a c-partition of $E(G)$. Next, the weighted quotient graphs $(G_1,\lambda_1,\lambda_1')$ and $(G_2,\lambda_2,\lambda_2')$ can be easily determined, see Figure \ref{kvo_grafa}. As in Section 3, the graph $G_i$ denotes the quotient graph $G/F_i$ for $i \in \lbrace 1,2 \rbrace$. Moreover, the weights are calculated as in Corollary \ref{posl_izreka}.

\begin{figure}[h!] 
\begin{center}
\includegraphics[scale=0.6,trim=0cm 0cm 0cm 0cm]{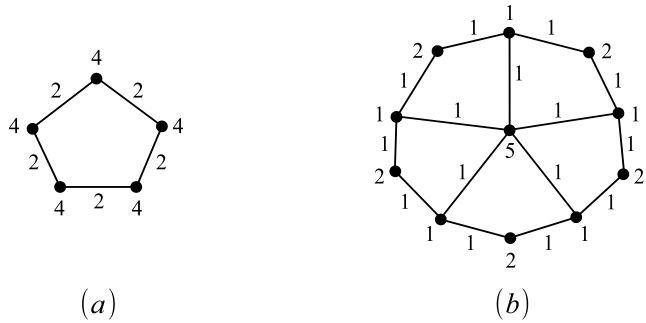}
\end{center}
\caption{\label{kvo_grafa} Weighted quotient graphs: (a) $(G_1,\lambda_1,\lambda_1')$ and (b) $(G_2,\lambda_2,\lambda_2')$.}
\end{figure}

\noindent
It is obvious that $Mo(G_1,\lambda_1,\lambda_1')=0$. Moreover, to calculate $Mo(G_2,\lambda_2,\lambda_2')$, we consider just two types of edges, i.e.\ the five edges that have the central vertex for an end-vertex and the remaining ten edges. Therefore, by Corollary \ref{posl_izreka} the calculation of the Mostar index becomes trivial:
\begin{eqnarray*}
Mo(G) & = & Mo(G_1,\lambda_1,\lambda_1') + Mo(G_2,\lambda_2,\lambda_2') \\
& = & Mo(G_2,\lambda_2,\lambda_2') \\
& = & 5 \cdot 1 \cdot |15 - 5| + 10 \cdot 1 \cdot |15-5| \\ 
& = & 150.
\end{eqnarray*}

\section*{Acknowledgement}
\noindent
The author Niko Tratnik acknowledge the financial support from the Slovenian Research Agency (research core funding No. P1-0297 and J1-9109).

  \baselineskip=16pt

\end{document}